\newtheorem{theorem}{Theorem}[section]
\title{Error Estimation and Stopping Criteria for Krylov-Based Model Order Reduction in Acoustics} 
\author[1]{Siyang Hu}
\author[2]{Nick Wulbusch}
\author[2]{Alexey Chernov}
\author[3,1]{Tamara Bechtold}
\affil[1]{University of Rostock, Albert-Einstein-Str. 2, 18059 Rostock, Germany (e-mail: \{siyang.hu, tamara.bechtold\}@uni-rostock.de)}
\affil[2]{University of Oldenburg, Ammerländer Heerstraße 114 - 118, 26129 Oldenburg (Oldb), Germany, (e-mail: \{nick.wulbusch, alexey.chernov\}@uni-oldenburg.de)}
\affil[3]{Jade University of Applied Sciences, Friedrich-Paffrath-Str. 101, 26389 Wilhelmshaven, Germany (e-mail: tamara.bechtold@jade-hs.de).}
\date{}              
\begin{document}
\maketitle
\begin{abstract}              
Depending on the frequency range of interest, finite element-based modeling of acoustic problems leads to dynamical systems with very high dimensional state spaces. As these models can mostly be described with second order linear dynamical system with sparse matrices, mathematical model order reduction provides an interesting possibility to speed up the simulation process. 
In this work, we tackle the question of finding an optimal order for the reduced system, given a desired accuracy. To do so, we revisit a heuristic error estimator based on the difference of two reduced models from two consecutive Krylov iterations. We perform a mathematical analysis of the estimator and show that the difference of two consecutive reduced models does provide a sufficiently accurate estimation for the true model reduction error. This claim is supported by numerical experiments on two acoustic models. We briefly discuss its feasibility as a stopping criterion for Krylov-based model order reduction.\bigskip

Keywords: Model Reduction, Error Estimation, Acoustics, Helmholtz Equation, Second Order System, SOAR

MSC Classification: 37M05, 65P99, 65L60

\end{abstract}

%\begin{keyword}
%Model Reduction, Error Estimation, Acoustics, Helmholtz Equation, Second Order System, SOAR
%\end{keyword}

\section{Introduction}
For acoustic simulations, frequency range of interest typically correlates with the human hearing range that goes up to 20 000~\si{\hertz}. For an accurate resolution of acoustic waves with the finite-element-method (FE method), empirical studies require the mesh size $h$ to be at least five times smaller than its wave length $\lambda$, i.e., $\lambda/h>10$ (\cite{ihlenburg1998finite}). Therefore, studies covering the entire human hearing range currently cannot be conducted by state-of-the-art computer-aided engineering tools (\cite{deckers2020case}).
Even FE analysis in mid-frequency range can be challenging due to the required mesh size. However, FE modeling of low to mid-frequency acoustic problems without complex damping results in a frequency independent, linear second order dynamical systems with typically very high dimensional but sparse matrices (\cite{deckers2020case}). This is an ideal scenario for the application of model order reduction (MOR) and can significantly reduce the required computational effort.

For general linear systems, MOR is well studied (\cite{antoulas2005approximation}). Application of MOR on acoustics and vibro-acoustics can be found in e.g., \cite{peters2014modal} using modal decomposition or \cite{puri2009reduced} using Krylov-based moment matching. For Krylov subspace-based MOR on structural dynamics and acoustics problems, a review is provided in \cite{hetmaniuk2012review}. One of the most recent works, \cite{aumann2023structured}, compares results of Krylov-based MOR with second order balanced truncation, applied to structural dynamics and acoustics problems. 

One important aspect of MOR is controlling the error. The most recent publication, \cite{feng2024posteriori}, surveys on different error estimates and bounds introduced in a series of previous works on parametric MOR. In \cite{feng2017some}, a residual-based error bounds was introduced, aiming at finding the optimal distribution of expansion points. Both parametric MOR and optimal distribution of expansion points are, however, currently outside the scope of this work. Our focus here is finding the reduce order model (ROM) with an optimal order that fulfills the desired accuracy. In this regard, \cite{panzer2013h} introduced $\mathcal{H}_2$ and $\mathcal{H}_{\infty}$ error bounds for Krylov-based MOR of systems with strictly dissipative realizations. Unfortunately, during our testings, a strictly dissipative realization could not always be achieved for general acoustic systems.

In this work, we take a deeper look at the error estimator based on the relative difference of ROMs generated by two consecutive Krylov iterations. The estimator was introduced in \cite{bechtold2004error} as an heuristic, and based on observations of purely thermal models. It is later implemented into 'Model Reduction inside Ansys` \cite{bechtold2007fast}, which has become industrial standard. We show that mathematically, the consecutive error indeed provides a good approximation for the actual error introduced by MOR. We evaluate the performance of the estimator on two acoustic models and discuss its suitability as a stopping criterion. 

\section{Mathematical Modeling of Acoustic Systems}
%Acoustic phenomena are typically modeled as Neumann boundary value problems for a wave function $p(\bm{x})$. 
Time-harmonic phenomena in acoustics are frequently modeled by the Helmholtz equation enhanced by appropriate boundary conditions. Specifically,
consider a sufficiently regular and bounded domain $\Omega \subset \mathbb{R}^3$, and a function $f:\Omega\rightarrow \mathbb{R}$, $f\in L^2(\Omega)$ representing volume sources. Then the complex pressure amplitude $p=p(\bm{x})$ corresponding to a fixed wave number $k = 2\pi/\lambda$ satisfies
%$f$ typically contain terms arising from a source inside the domain. A typical set consisting of Helmholtz equation and boundary conditions reads: 
\begin{align}
    \begin{split}
        -\Delta p - k^2 p &= f \quad \mbox{in } \Omega,\\
        \partial_n p &= g \quad \mbox{on } \Gamma_N,\\
        \partial_n p + i k p &= 0 \quad \mbox{on } \Gamma_R,
    \end{split}\label{eq:helmholtz}
\end{align}
where $\Gamma_N\cup\Gamma_R=\partial\Omega$ denote the acoustically rigid and impedance parts of the boundary, the symbol $\partial_n$ denotes the normal derivative pointing outside $\Omega$, $g$ describes the excitation of a source on the boundary, and $i$ is the imaginary unit.

%Given an acoustics mass source $Q$ inside a fluid with  dynamic viscosity $\mu$ , bulk modulus $K$ and density $\rho_0$. The pressure $p$ at any point in a three dimensional space is described by the Helmholtz equation:
%\begin{equation}
%    \nabla^2 p + \frac{4}{3}\frac{\mu}{K}\nabla^2\dot{p}-\frac{1}{c^2}\ddot{p} = -Q+\frac{4}{3}\frac{\mu}{\rho_0}\nabla^2\dot{Q},\label{eq:helmholtz}
%\end{equation}
%where $c$ denotes the speed of sound. 
In this work, we focus on solving the boundary value problem \eqref{eq:helmholtz} using the finite element method (FEM). After applying spatial discretization, we obtain
\begin{equation}
\Sigma:=
\begin{cases}
    (-k^2\bm{M}+ik\bm{D}+\bm{K})\bm{{p}} = \bm{B{u}}\\
    \bm{y} = \bm{Cp}.
    \label{eq:sys}
\end{cases}
\end{equation}
%a second order linear dynamical system
%\begin{equation}
%    \Sigma := \begin{cases}
%        \bm{M\ddot{p}} + \bm{D\dot{p}} + \bm{Kp} = \bm{Bu},\\
%        \bm{y} = \bm{Cp}.
%    \end{cases}\label{eq:sys}
%\end{equation}
Here, $\bm{{p}}\in\mathbb{C}^n$ is the state vector, containing the complex nodal amplitudes. $\bm{M},\bm{D},\bm{K}\in\mathbb{R}^{n\times n}$ are the mass, damping and stiffness matrix, respectively. The complex load $\bm{{u}}\in\mathbb{C}^{p_i}$ is scaled and distributed onto the nodes via the input matrix $\bm{B}\in\mathbb{R}^{n\times p_i}$ and the output vector $\bm{y}\in\mathbb{R}^{p_o}$ is gathered via a user-defined output matrix $\bm{C}\in\mathbb{R}^{p_o\times n}$. Solving acoustic problems typically requires solving a series of harmonic analysis \eqref{eq:sys}, i.e., for each frequency / wave number of interest.

\section{Krylov-based Model Order Reduction}
The goal of general MOR is to find a significantly lower dimensional surrogate for $\Sigma$ from \eqref{eq:sys}
\begin{equation}
    \Sigma_r := \begin{cases}
        (-k^2\bm{M}_r+ik\bm{D}_r+\bm{K}_r)\bm{{p}}_r = \bm{B_r{u}}\\
        \bm{y} = \bm{C}_r\bm{p}_r,
    \end{cases}\label{eq:sys_r}
\end{equation}
which approximates the behavior of the original system up to a defined error tolerance. Typically, the approximation is achieved via projection onto a suitable subspace using appropriate matrices $\bm{W}, \bm{V}\in\mathbb{R}^{n\times r}, r\ll n$, i.e.,
\begin{gather}
    \bm{p} \approx \bm{Vp}_r, \\
    \{\bm{M}_r,\bm{D}_r,\bm{K}_r\} = \bm{W^T}\{\bm{M},\bm{D},\bm{K}\}\bm{V},\\
    \bm{B}_r = \bm{W^TB}, \quad \bm{C}_r = \bm{CV}.
\end{gather}
The input $\bm{u}$ and the output $\bm{y}$ are untouched from \eqref{eq:sys}. When we choose $\bm{W} = \bm{V}$, then the projection is called one-sided. 

In Krylov-based MOR, the columns of matrices $\bm{W}$ and $\bm{V}$ are typically chosen as an orthonormal basis of 
the input and output Krylov subspaces, respectively. For second order systems, \cite{salimbahrami2006order} introduced the second order Krylov subspaces, which are defined as follows:
\begin{equation}
    \mathcal{K}_r(\bm{A_1},\bm{A_2},\bm{P_0}) = {\rm colspan}\{\bm{P_0},\bm{P_1},\dots,\bm{P_{r-1}}\},
\end{equation}
where
\begin{equation}
    \begin{cases}
        \bm{P_1} = \bm{A_1P_0}\\
        \bm{P_i} = \bm{A_1P_{i-1}}+\bm{A_2P_{i-2}},\quad i = 2,3,\dots.
    \end{cases}    
\end{equation}
With this definition, the second order input Krylov subspace corresponding to \eqref{eq:sys} about a chosen expansion point $k_0$ is defined as
\begin{equation}
    \mathcal{K}_{r_1}(-\bm{\tilde{K}}^{-1}\bm{\tilde{D}},-\bm{\tilde{K}}^{-1}\bm{M},-\bm{\tilde{K}}^{-1}\bm{B})
\end{equation}
and the second order output Krylov subspace of \eqref{eq:sys} is defined as 
\begin{equation}
    \mathcal{K}_{r_2}(-\bm{\tilde{K}}^{-T}\bm{\tilde{D}}^T,-\bm{\tilde{K}}^{-T}\bm{M}^T,-\bm{\tilde{K}}^{-T}\bm{C}^T),
\end{equation}
where, $\bm{\tilde{K}} = s_0^2\bm{M}+s_0\bm{D}+\bm{K}$ and $\bm{\tilde{D}} = 2s_0\bm{M}+\bm{D}$ are the shifted system matrices for the fixed parameter value $s_0=ik_0$. It is shown in \cite{lohmann2005reduction}, that with this choice of projection matrices, the first $r_1+r_2$ moments of the full order model (FOM) $\Sigma$ and the ROM $\Sigma_r$ are matched, which are the negative coefficients of the Taylor series expansion of both system's transfer functions
\begin{equation}
    \bm{G}(s) = \bm{C}[(s-s_0)^2\bm{M}+(s-s_0)\bm{\tilde{D}}+\bm{\tilde{K}}]^{-1}\bm{B}, \label{eq:g(s)}
\end{equation}
and
\begin{equation}
    \bm{G}_r(s) = \bm{C}_r[(s-s_0)^2\bm{M}_r+(s-s_0)\bm{\tilde{D}}_r+\bm{\tilde{K}}_r]^{-1}\bm{B}_r. \label{eq:gr(s)}
\end{equation}
Here, $\bm{\tilde{D}}_r$ and $\bm{\tilde{K}}_r$ are, again the shifted system matrices of the ROM, defined analogously to the FOM, and $s$ is a complex parameter sufficiently close to $s_0$, so that $\bm{G}(s)$ and $\bm{G}_r(s)$ are well-defined. For acoustic systems, notice that $G(s_0)$ is well-defined for $k>0$ according to \cite[Theorem 2.27]{ihlenburg1998finite} for sufficiently small mesh size $h$. We emphasize that \cite{lohmann2005reduction} only considered single-input-single-output systems (i.e., $p_i=p_o=1$), however, the moment matching ability also extends to single-input-multiple-output and 
multiple-input-multiple-output systems due to the superposition property of linear systems (\cite{benner2008using}).

Based on their findings, the authors introduced the second order Arnoldi algorithm (SOAR), which has become state of the art for MOR of second order systems. SOAR is a one-sided approach and, therefore, only matches the first $r_1$ moments when e.g., the input Krylov subspace is used.

\section{Estimation of Approximation Error}
Let us consider the relative frequency-response error
\begin{equation}
    E_r(s) = \frac{||\bm{G}(s)-\bm{G}_r(s)||}{||\bm{G}(s)||}, \label{eq:freq_error}
\end{equation}
where $\bm{G}(s)$ and $\bm{G}_r(s)$ are the transfer functions of the FOM and the reduced order model (ROM) of order $r$. In \cite{bechtold2004error}, the authors suggest estimating the frequency error $E_r$ with the relative frequency-response error between two ROMs arising from two consecutive Krylov iteration  
\begin{equation} 
    \hat{E}_r(s) = \frac{||\bm{G}_{r+1}(s)-\bm{G}_r(s)||}{||\bm{G}_r(s)||}. \label{eq:freq_error_estim}
\end{equation}
For their considered thermal models, the authors observed a good match between the relative error and the estimator. Therefore, we will have a closer look at the estimator to evaluate its suitability for acoustic problems.
As an alternative to \eqref{eq:freq_error_estim}, we consider the rescaled version
    \begin{equation}
    \tilde{E}_r(s) = \frac{||\bm{G}_{r+1}(s)-\bm{G}_r(s)||}{||\bm{G}_{r+1}(s)||}. 
    \label{eq:error_estim_2}
\end{equation}
The following Theorems addresses the accuracy of the above estimators.

\begin{theorem}\label{thm:abs_error}
    Let $s_0=ik_0$ and $s$ be sufficiently close to $s_0$ so that $\bm{G}(s)$ is well-defined according to \eqref{eq:g(s)}.
    %Consider system \eqref{eq:sys}, where $(-k^2\bm{M}+ik\bm{D}+\bm{K})$ is invertible, such that the transfer function $\bm{G}(s)$ is well-defined according to \eqref{eq:g(s)}. 
    Let $\bm{G}_r$ be the transfer function of the reduced order model of order $r$ according to \eqref{eq:gr(s)} obtained after $r$ iterations of a Krylov subspace-based MOR method and suppose that $\bm{G}_r(s)$ is well-defined, i.e., the inverse in \eqref{eq:gr(s)} exists.
    Then $||\bm{G}_{r+1}(s)-\bm{G}_r(s)||$ is an estimator for the absolute error $||\bm{G}(s)-\bm{G}_r(s)||$ and there holds
    \begin{equation*}
         ||\bm{G}(s)-\bm{G}_r(s)|| = ||\bm{G}_{r+1}(s)-\bm{G}_r(s)|| + \mathcal{O}((s-s_0)^{r+1}).
    \end{equation*}
\end{theorem}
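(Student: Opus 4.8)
The plan is to reduce everything to the moment-matching property of Krylov-based MOR recalled above, combined with a one-line algebraic identity and the reverse triangle inequality. The key observation is that both $\bm{G}_r$ and $\bm{G}_{r+1}$ interpolate the Taylor expansion of $\bm{G}$ about $s_0$, but $\bm{G}_{r+1}$ matches exactly one additional moment. Hence the discrepancy between $\bm{G}$ and $\bm{G}_{r+1}$ is one order higher in $(s-s_0)$ than the discrepancy between $\bm{G}$ and $\bm{G}_r$, and it is precisely this gap that makes the consecutive-model difference share the leading-order term of the true error.

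Concretely, I would proceed in three steps. First, I would invoke the moment-matching result: after $r$ iterations the first $r$ moments (Taylor coefficients of the transfer function at $s_0$) of the FOM and the ROM coincide, so that
\begin{equation*}
    \bm{G}(s) - \bm{G}_r(s) = \mathcal{O}((s-s_0)^{r}), \qquad \bm{G}(s) - \bm{G}_{r+1}(s) = \mathcal{O}((s-s_0)^{r+1}).
\end{equation*}
Second, I would use the trivial identity
\begin{equation*}
    \bm{G}_{r+1}(s) - \bm{G}_r(s) = \bigl(\bm{G}(s) - \bm{G}_r(s)\bigr) - \bigl(\bm{G}(s) - \bm{G}_{r+1}(s)\bigr),
\end{equation*}
which rearranges to $\bigl(\bm{G}(s)-\bm{G}_r(s)\bigr) - \bigl(\bm{G}_{r+1}(s)-\bm{G}_r(s)\bigr) = \bm{G}(s)-\bm{G}_{r+1}(s) = \mathcal{O}((s-s_0)^{r+1})$. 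Third, applying the reverse triangle inequality to the quantities $\bm{G}(s)-\bm{G}_r(s)$ and $\bm{G}_{r+1}(s)-\bm{G}_r(s)$ bounds the difference of their norms by the norm of their difference, which is $\mathcal{O}((s-s_0)^{r+1})$. This yields the claimed identity directly, and works for any (vector or matrix) norm since it relies only on the triangle inequality.

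The main obstacle is not the algebra but pinning down the moment count and the analyticity uniformly near $s_0$. I would need to confirm that one SOAR iteration matches precisely one additional moment, so that the remainder exponent is exactly $r+1$ rather than $r$, using the one-sided moment-matching statement recalled earlier; the two-sided case would shift the bookkeeping but not the argument. I would also have to ensure that $\bm{G}$, $\bm{G}_r$ and $\bm{G}_{r+1}$ are simultaneously analytic on a common disc around $s_0$, so that all three Taylor expansions and their $\mathcal{O}$-remainders are valid there with constants controlled by the relevant resolvent norms; this is where the well-posedness hypotheses on $\bm{G}(s)$ and $\bm{G}_r(s)$ enter. Finally, I would flag the one genuinely heuristic point the theorem does not address: the identity controls the \emph{difference} of the two norms but not their \emph{ratio}, so the passage to the relative estimators $\hat{E}_r$ and $\tilde{E}_r$ is sharp only when the leading $\mathcal{O}((s-s_0)^{r})$ term of the true error does not itself vanish — a caveat worth carrying into the later discussion of the estimator as a stopping criterion.
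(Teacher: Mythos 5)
Your proposal is correct and follows essentially the same route as the paper: both invoke the moment-matching property to get $\bm{G}(s)-\bm{G}_{r+1}(s)=\mathcal{O}((s-s_0)^{r+1})$ and then apply the reverse triangle inequality to $\bm{G}(s)-\bm{G}_r(s)$ and $\bm{G}_{r+1}(s)-\bm{G}_r(s)$. The paper merely makes the step explicit by writing out the Taylor/moment expansions of both transfer functions, while your additional remarks on analyticity and on the ratio versus difference of norms are sensible framing but do not change the argument.
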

\begin{proof} 
The formal power series expansions for $\bm{G}(s)$ and $\bm{G}_r(s)$ read
    \begin{equation}
        \bm{G}(s) = \sum_{\ell=0}^\infty - \bm{m}_\ell (s-s_0)^\ell,
    \end{equation}
    and 
    \begin{equation}
        \bm{G}_r(s) = \sum_{\ell=0}^\infty - \bm{m}_{\ell}^{(r)} (s-s_0)^\ell,
    \end{equation}
%    W.l.o.g., let us consider MOR using the moment-matching-based algorithm at $s_0 = 0$. Let $\bm{G}(s)$ be the transfer function of the FOM about zero. Its Taylor series expansion is given by
%    \begin{equation}
%        \bm{G}(s) = \sum_{i=0}^\infty - \bm{m_i} s^i,
%    \end{equation}
%    with $\bm{m_i}$ being the moments of the of the system (about $s_0 = 0$). Accordingly, the Taylor series expansion of the ROM's transfer function $\bm{G_r}^{(n)}(s)$ reads
%    \begin{equation}
%        \bm{G_r}^{(n)}(s) = \sum_{i=0}^\infty - \bm{m_{r,i}}^{(n)} s^i,
%    \end{equation}
    with $\bm{m}_\ell^{(r)}$ being the moments of the of the reduced system after $r$ iterations.
    Due to the moment-matching properties of Krylov-based MOR \cite[Theorem 4.1]{bai2005dimension}, we have $\bm{m}_\ell=\bm{m}_{\ell}^{(r)}$, $\ell=0,\dots,r-1$ and therefore there holds    
    \begin{align}
    \begin{split}
        \bigg| ||\bm{G}&(s)-\bm{G}_r(s)||  - ||\bm{G}_{r+1}(s)-\bm{G}_r(s)||\bigg|\\
        &\leq ||\bm{G}(s)-\bm{G}_{r+1}(s)||\\
        &= \bigg|\bigg| \sum_{\ell=r+1}^\infty (\bm{m}_\ell - \bm{m}_{\ell}^{(r+1)}) (s-s_0)^\ell\bigg|\bigg|\\
        &= \mathcal{O}((s-s_0)^{r+1}).
    \end{split}\label{eq:abs_error_comp}
    \end{align}
\end{proof}    
%Additionally, we have $||\bm{G}(s)|| \le ||\bm{G}_r^{(n+1)}(s)|| + \mathcal{O}(s^{n+1})$, such that we can use the error estimator to approximate the true error.
\begin{theorem} Suppose $\bm{G}(s_0) \neq 0$ and $\bm{G}_q(s_0) \neq 0$ for $q=r$ and $r+1$.
Under assumptions of Theorem \ref{thm:abs_error} both $\hat{E}_r$ and $\tilde{E}_r$ are estimators for the true relative approximation error $E_r$ of a ROM generated by moment-matching-based MOR about a given expansion point $s_0$. Moreover, there holds
\begin{equation}\label{eq:rel_err_1}
    E_r(s) = \hat{E}_r(s) + \mathcal{O}((s-s_0)^{r+1}),
\end{equation}
\begin{equation}\label{eq:rel_err_2}
    E_r(s) = \tilde{E}_r(s) + \mathcal{O}((s-s_0)^{r+1}).
\end{equation}
\end{theorem}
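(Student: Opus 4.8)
The plan is to reduce the relative statement to the absolute statement already established in Theorem \ref{thm:abs_error}, the only new difficulty being that the three expressions $E_r$, $\hat{E}_r$, $\tilde{E}_r$ carry \emph{different} denominators $||\bm{G}(s)||$, $||\bm{G}_r(s)||$ and $||\bm{G}_{r+1}(s)||$. First I would record two consequences of moment matching. Since the moments of $\bm{G}$ and $\bm{G}_r$ agree up to order $r-1$, the absolute error satisfies $||\bm{G}(s)-\bm{G}_r(s)|| = \mathcal{O}((s-s_0)^r)$, and by Theorem \ref{thm:abs_error} the common estimator numerator obeys $||\bm{G}_{r+1}(s)-\bm{G}_r(s)|| = \mathcal{O}((s-s_0)^r)$ as well. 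Likewise, the moments of $\bm{G}$ and $\bm{G}_{r+1}$ agree up to order $r$, giving $||\bm{G}(s)-\bm{G}_{r+1}(s)|| = \mathcal{O}((s-s_0)^{r+1})$. The hypotheses $\bm{G}(s_0)\neq 0$ and $\bm{G}_q(s_0)\neq 0$ for $q=r,r+1$, together with continuity of the transfer functions, guarantee that all three denominators are bounded away from zero on a neighborhood of $s_0$, so every quotient below is well-defined there.

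For \eqref{eq:rel_err_1} I would split the difference into a numerator contribution and a denominator contribution,
\begin{equation*}
    E_r(s)-\hat{E}_r(s) = \frac{||\bm{G}(s)-\bm{G}_r(s)|| - ||\bm{G}_{r+1}(s)-\bm{G}_r(s)||}{||\bm{G}(s)||} + ||\bm{G}_{r+1}(s)-\bm{G}_r(s)||\left(\frac{1}{||\bm{G}(s)||} - \frac{1}{||\bm{G}_r(s)||}\right).
\end{equation*}
The first summand is $\mathcal{O}((s-s_0)^{r+1})$ directly from Theorem \ref{thm:abs_error} and the boundedness below of $||\bm{G}(s)||$. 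For the second summand I would rewrite the bracketed factor as $(||\bm{G}_r(s)||-||\bm{G}(s)||)/(||\bm{G}(s)||\,||\bm{G}_r(s)||)$ and bound its numerator via the reverse triangle inequality, $|\,||\bm{G}_r(s)||-||\bm{G}(s)||\,| \le ||\bm{G}(s)-\bm{G}_r(s)|| = \mathcal{O}((s-s_0)^r)$. Multiplying this by the estimator numerator, itself $\mathcal{O}((s-s_0)^r)$, yields $\mathcal{O}((s-s_0)^{2r})$; since $r\ge 1$ we have $2r\ge r+1$, so the second summand is also $\mathcal{O}((s-s_0)^{r+1})$. Adding the two contributions proves \eqref{eq:rel_err_1}.

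The estimate \eqref{eq:rel_err_2} uses the identical splitting with $||\bm{G}_{r+1}(s)||$ in place of $||\bm{G}_r(s)||$. The only change is in the denominator contribution: the reverse triangle inequality now gives $|\,||\bm{G}_{r+1}(s)||-||\bm{G}(s)||\,| \le ||\bm{G}(s)-\bm{G}_{r+1}(s)|| = \mathcal{O}((s-s_0)^{r+1})$, which multiplied by the $\mathcal{O}((s-s_0)^r)$ estimator numerator gives $\mathcal{O}((s-s_0)^{2r+1})$, comfortably of order $r+1$.

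I expect the only delicate point to be the order bookkeeping in the denominator contribution, specifically the observation that the estimator numerator already carries a factor $(s-s_0)^r$, so that even the modest $\mathcal{O}((s-s_0)^r)$ control on the denominator mismatch for $\hat{E}_r$ suffices once $r\ge 1$. The well-definedness of the quotients near $s_0$ is a routine consequence of the nonvanishing hypotheses and continuity, but it deserves an explicit sentence since the two estimators divide by ROM quantities rather than by $||\bm{G}(s)||$.
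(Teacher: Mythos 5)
Your proposal is correct and follows essentially the same route as the paper: both split the discrepancy into a numerator part controlled by Theorem \ref{thm:abs_error} and a denominator-mismatch part, bound the latter via the reverse triangle inequality by the product $\mathcal{O}((s-s_0)^{q})\cdot\mathcal{O}((s-s_0)^{r})$, and close with the observation that $q+r\geq r+1$ since $r\geq 1$. Your explicit remark on the quotients being well-defined near $s_0$ is a point the paper leaves implicit, but the argument is the same.
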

\begin{proof}
Bound \eqref{eq:abs_error_comp} implies
\begin{equation}
    \bigg|E_r(s) - \frac{||\bm{G}_{r+1}(s) - \bm{G}_r(s)||}{||\bm{G}(s)||} \bigg| = \mathcal{O}((s-s_0)^{r+1})
\end{equation}
and therefore, for $q=r$ and $r+1$
\begin{equation}
    \begin{split}
            \bigg|E_r(s) &- \frac{||\bm{G}_{r+1}(s) - \bm{G}_r(s)||}{||\bm{G}_q(s)||} \bigg| \\
            &\leq \frac{||\bm{G}(s)-\bm{G}_q(s)||}{||\bm{G}(s)||} \frac{||\bm{G}_{r+1}(s)-\bm{G}_r(s)||}{||\bm{G}_q(s)||}\\ 
            &+ \mathcal{O}((s-s_0)^{r+1})
    \end{split}
\end{equation}
Since $\bm{G}(s)$ is continuous and $\bm{G}(s_0) \neq 0$ we have 
\[
\frac{||\bm{G}(s)-\bm{G}_q(s)||}{||\bm{G}(s)||} = \mathcal{O}((s-s_0)^{q}).
\]
Analogously, since $\bm{G}_q(s)$ is continuous and $\bm{G}_q(s_0) \neq 0$ 
\[
 \frac{||\bm{G}_{r+1}(s)-\bm{G}_r(s)||}{||\bm{G}_q(s)||} = \mathcal{O}((s-s_0)^{r}).
\]
Since $q \geq r \geq 1$ this implies
\begin{equation}
    \bigg|E_r(s) - \frac{||\bm{G}_{r+1}(s) - \bm{G}_r(s)||}{||\bm{G}_q(s)||} \bigg| = \mathcal{O}((s-s_0)^{r+1})
\end{equation}
and therefore, \eqref{eq:rel_err_1} with $q=r$ and \eqref{eq:rel_err_2} with $q=r+1$.
\end{proof}

\section{Numerical Experiments}
In this section, we perform SOAR on two acoustic models. We compare error estimator \eqref{eq:error_estim_2} and the estimator for the absolute error (from Theorem \ref{thm:abs_error}) to their respective counterparts. For these experiments, we do not consider any volume sources and therefore, we set the right hand side of the Helmholtz equation in \eqref{eq:helmholtz} to $f=0$. The first experiment is carried out in Matlab, with our own implementation of the model and MOR. For the second experiment, the model is implemented in  Ansys\textsuperscript{\textregistered} Mechanical Enterprise Academic Research, Release 2023.2 (\cite{ansys}) and MOR is performed with the toolbox Model Reduction inside Ansys (\cite{bechtold2007fast}). 

\subsection{2D Helmholtz problem}
For the first experiment, we consider a simple two-dimensional Helmholtz problem on the unit square $\Omega=[0,1]^2$, as illustrated in Figure \ref{fig:2dHelmholtz}. The problem is described with \eqref{eq:helmholtz}, where we set $g=10i$. For finite-element-analysis, the domain is discretized with triangular elements in the size of $~2^{-8}$. The output of the system is defined as the acoustic pressure measured at the points marked in Figure \ref{fig:2dHelmholtz}.

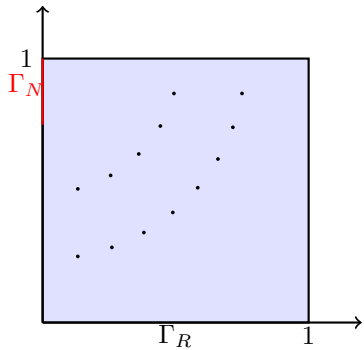
\begin{figure}[H]
    \centering
    \begin{tikzpicture}[scale=0.7]    
        \coordinate (A1) at (0cm,0cm);
                    \coordinate (A2) at (5cm,0cm);
                   
                    \coordinate (A3) at (5cm,5cm);
                    \coordinate (A4) at (0cm,5cm);
                   
                    \coordinate (A5) at (0cm,3.75cm);
                   
                    \fill[blue!60, opacity=0.2] (A1) -- (A2) -- (A3) -- (A4) -- cycle;
                    \draw[thick] (A1) -- (A2) -- (A3) -- (A4) -- (A1);
                   
                    \draw[thick,arrows=->] (A1) -- (6cm,0) node[right] {};
                    \draw[thick,arrows=->] (A1) -- (0,6cm) node[left] {};
                   
                    \node at (5cm,-0.25cm) {$1$};
                    \node at (-0.3cm,5cm) {$1$};
                   
                    \draw[thick,red] (A4) -- (A5);
                    \node at (-0.3cm,4.5cm) {\textcolor{red}{$\Gamma_{N}$}};
                    \node at (2.5cm,-0.25cm) {\textcolor{black}{$\Gamma_{R}$}};

                    \foreach \x in {1,...,5}
                    {
                                    \fill ({5*cos(-\x*15)*0.5111cm},{5*sin(-\x*15)*0.5111cm+5cm}) circle (1pt);
                    }
                    \foreach \x in {1,...,8}
                    {
                                    \fill ({5*cos(-\x*10)*0.7611cm},{5*sin(-\x*10)*0.7611cm+5cm}) circle (1pt);
                    }
    \end{tikzpicture}
    \caption{Domain for model problem. Points illustrate measurements points at which the error is computed in the numerical experiments.}
    \label{fig:2dHelmholtz}
\end{figure}

For MOR, we have chosen multiple expansion points at $k = \{20,60,100\}$. The projection matrix is constructed in the following way: First one column is added based on the first expansion point (20), the second one on the second expansion point (60) then the third (100). Then again for the first etc.. For example: The ROM of dimension 40 would have 14 columns from the first expansion point (20), while the expansion points 60 and 100 both contribute 13 columns.

\begin{figure}[H]
	\centering
	\includegraphics[width=0.98\linewidth]{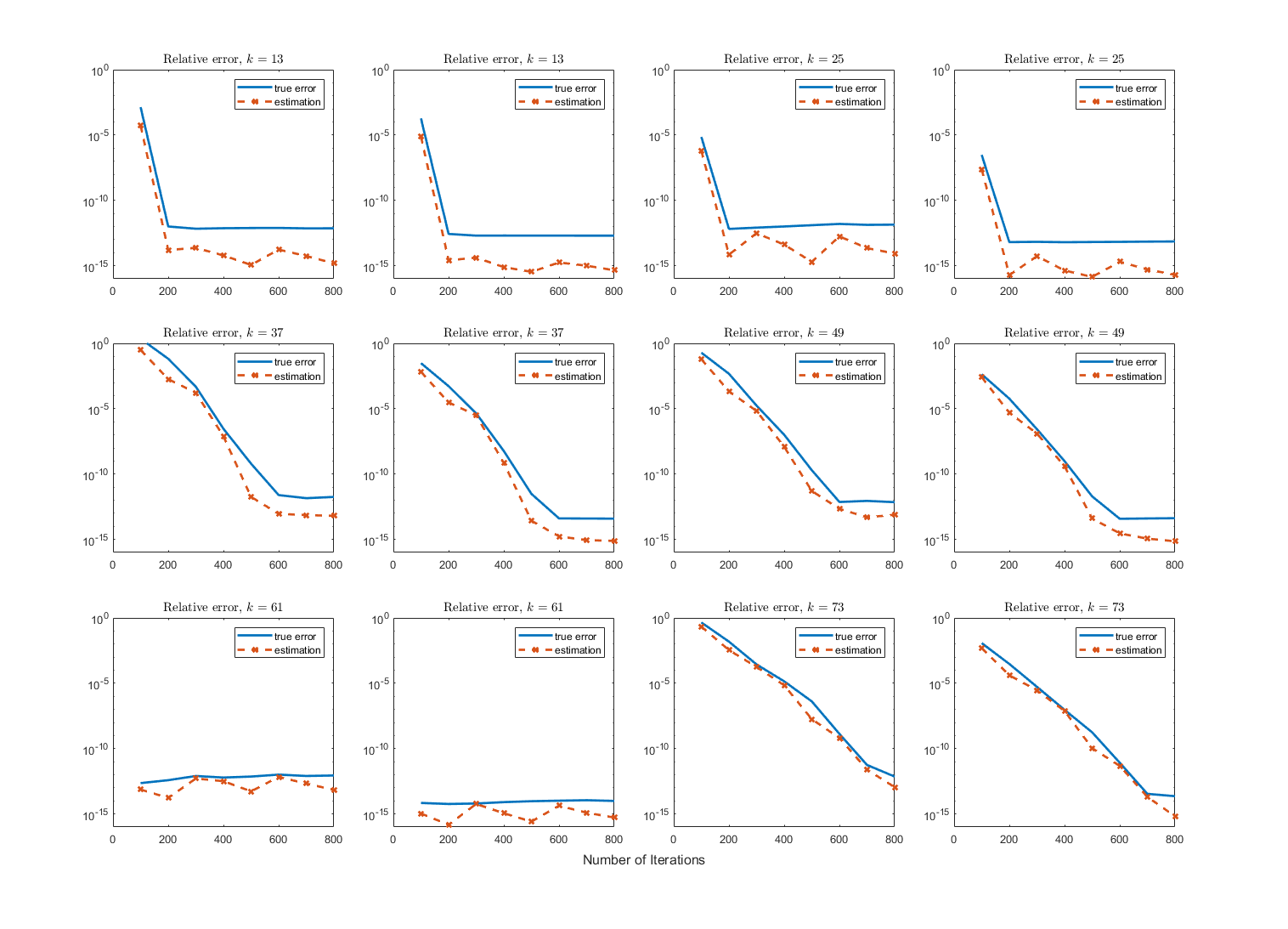}
	\caption{True error vs error estimator (sup-norm) for the 2D Helmholtz problem and ROMs of dimensions every 100 iterations up to 800.}
	\label{fig:2d_error_v_estim}
\end{figure}

In Figure \ref{fig:2d_error_v_estim}, we present the error in dependence of the number of iterations, i.e., the dimension of the ROM (for some fixed wave number $k$). We plotted the true MOR error and the estimator every 100 iterations up to a total of 800 iterations. We observe convergence to machine precision over the course of the Arnoldi iterations. The number of iterations required for convergence mainly depends on the distance between the considered wave number and the chosen expansion points.

\subsection{3D Helmholtz Problem: Speaker Plate}
For the second experiment, we consider a more realistic test-case. It consists of a speaker and a plate in a three-dimensional space. We set the computational to a sphere enclosing the speaker-plate setup, as depicted in Figure \ref{fig:3dHelmholtz}.

\begin{figure}[H]
    \centering
    \includegraphics[width=0.4\linewidth]{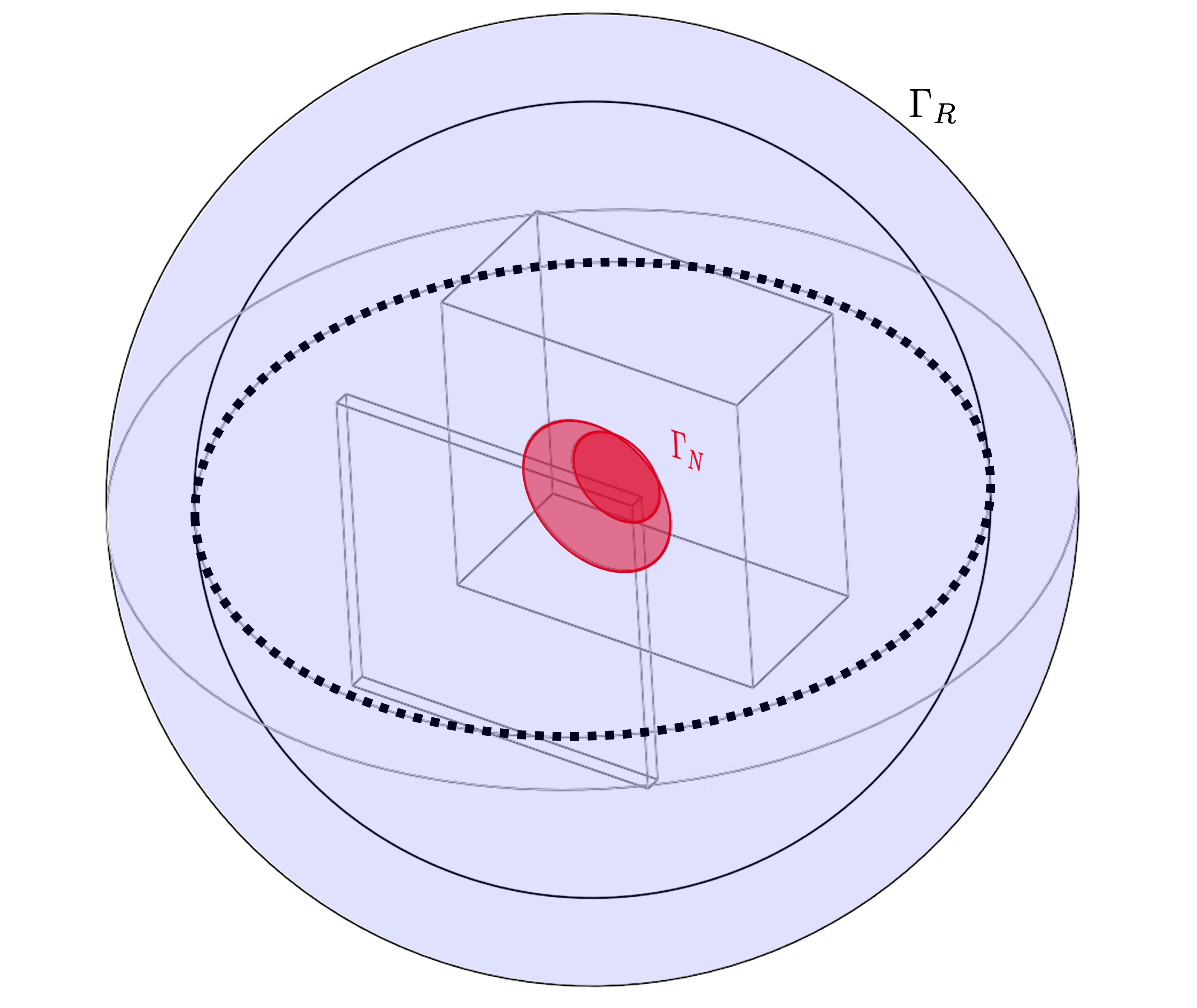}
    \caption{Domain for the 3D Helmholtz problem involving a speaker and a plate. The pointed line illustrates measurement points (all nodes on the line) at which the error is computed in the numerical experiments.}
    \label{fig:3dHelmholtz}
\end{figure}

Both the plate and the speaker box are square shaped at $0.1\times 0.1$~\si{\square\metre} and have their respective thicknesses valued at 0.005~\si{\metre} and 0.05~\si{\metre}. The inner and outer diameter of the speaker unit are 0.3~\si{\metre} and 0.5~\si{\metre} with the depth of the membrane being 0.01~\si{\metre}. We set the surface of the speaker unit as $\Gamma_N$ and define $g$ in such a way that it emits sound waves at a velocity of 0.001~\si{\metre\per\second}. The diameter of the inner sphere, on which the acoustic pressure is measured, is set to 0.226~\si{\metre}. It is encapsulated with a second sphere of 0.276~\si{\metre} diameter on top of which we have defined the absorption boundary condition. Note that we treat both the speaker and the plate as infinitely rigid and therefore represent perfect reflectors. The domain is discretized with tetrahedral elements with the element size set to 0.01~\si{\metre}.

\begin{figure}[h]
	\centering
	\begin{subfigure}[b]{0.64\textwidth}
		\centering
		\includegraphics[width=\linewidth]{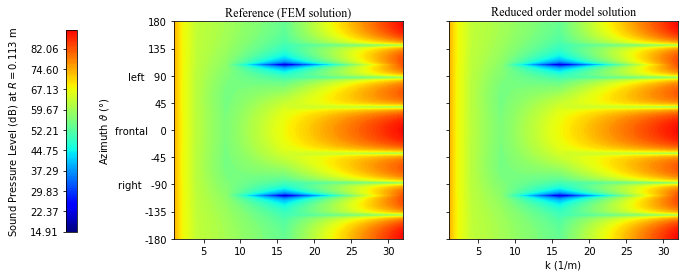}
	\end{subfigure}
	\hspace{7pt}
	\begin{subfigure}[b]{0.295\textwidth}
		\centering
		\includegraphics[width=\linewidth]{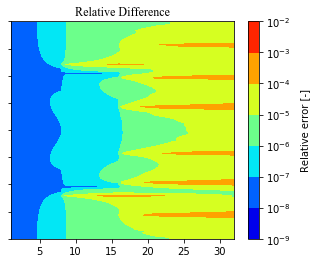}
	\end{subfigure}
	\caption{Comparison of FOM versus ROM plotted over different wave numbers $k$ and their relative difference.}
	\label{fig:3d_results}
\end{figure}

For MOR, we have chosen two expansion points at $k = \{2,18\}$~\si{\per\metre}. During this experiment, they are added sequentially, i.e., the first 400 columns of the projection matrix corresponds to $k=2~\si{\per\metre}$, the remaining columns to $k=18~\si{\per\metre}$. This results in a ROM of dimension $r=800$. A comparison of the FOM result versus the ROM results (sound pressure level at the output nodes) is shown shown in Figure \ref{fig:3d_results}. In Figure \ref{fig:3d_error_v_estim}, we present the error of the ROM over the ROM's dimensions. Similar to the two-dimensional case, the error converges to a minimum, valued at $~10^{-7}$. Here however, the estimator underestimates the error significantly upon convergence of the true error. For higher wave numbers, one can clearly see the impact of the additional expansion point, which is included after 400 iterations.
\begin{figure}[H]
	\centering
	\includegraphics[width=\linewidth,trim=50 40 50 0,clip=true]{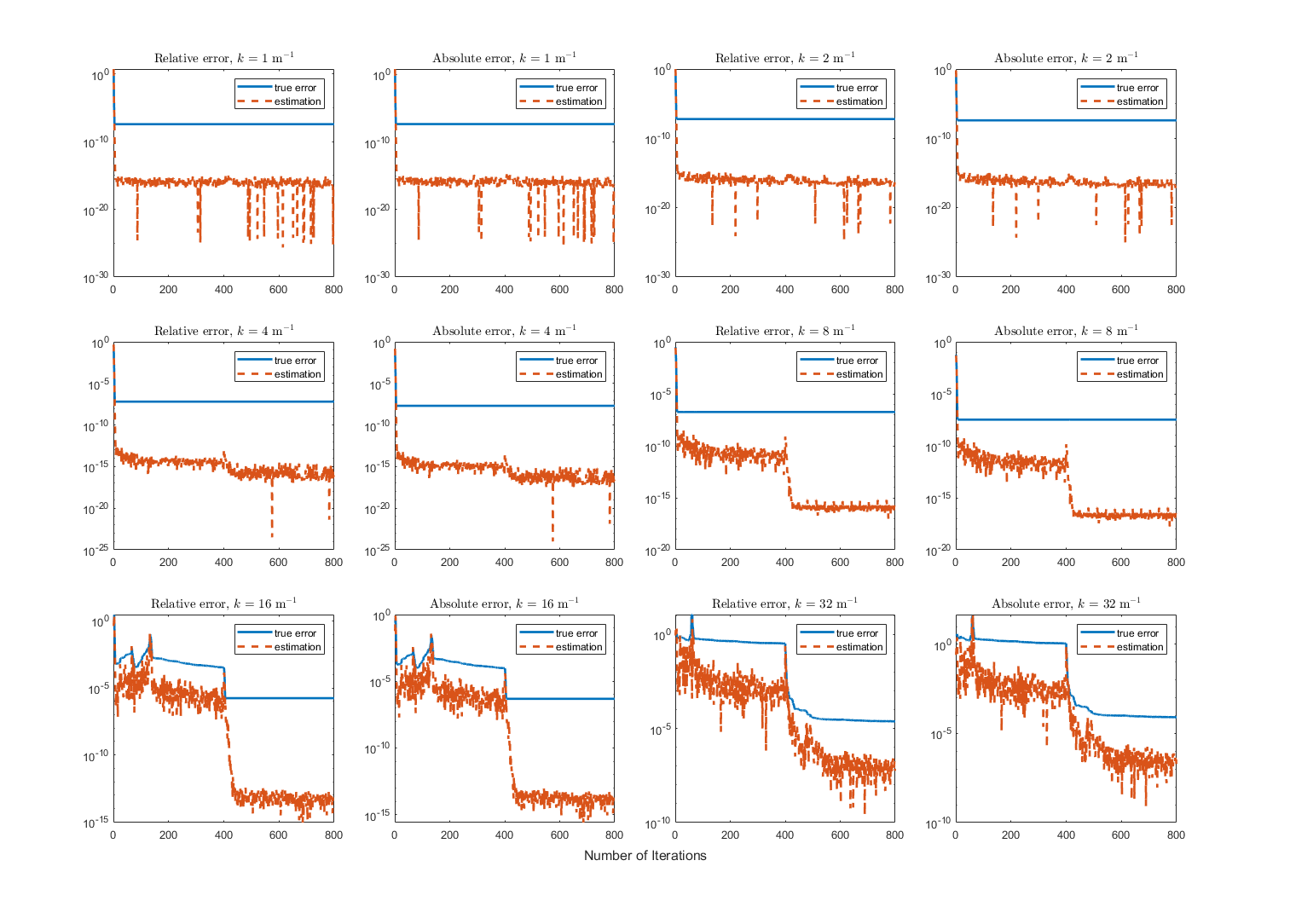}
	\caption{True error vs error estimator (2-norm) for the 3D Helmholtz problem and ROMs of dimensions up to 800.}
	\label{fig:3d_error_v_estim}
\end{figure}
\section{Conclusion \& Outlook}

In this work, we analysed the heuristic error estimator based on the difference between two ROMs generated by two consecutive iterations during Krylov-based MOR. We provided a mathematical foundation for the estimator and tested its performance on two acoustic models. While it performed well for the simple two-dimensional model, the estimator underestimated the error significantly for the more realistic three-dimensional model. We may account this to the worse condition number of the system matrices in the 3D case. However, the underestimation only occurred, when the true error has converged to its minimal value. Therefore, the estimation can still be used as a stopping criterion, e.g., stop the MOR algorithm, once the estimation has converged to its minimum value. However, due the oscillation of the estimator, one may consider some kind of low pass filtering, e.g., moving average. 

The evaluation of the MOR error has also shown, that the distances between the wave number of interest and the chosen expansion points significantly influence the MOR error. Therefore, future works may consider also taking the optimal distribution of expansion points into account.

\end{document}